\theoremstyle{plain}
\newtheorem{thm}{\protect\theoremname}
\theoremstyle{definition}
\newtheorem{problem}[thm]{\protect\problemname}
\theoremstyle{remark}
\newtheorem{rem}[thm]{\protect\remarkname}
\theoremstyle{plain}
\newtheorem{prop}[thm]{\protect\propositionname}
\theoremstyle{definition}
\newtheorem{defn}[thm]{\protect\definitionname}
\date{}
\providecommand{\definitionname}{Definition}
\providecommand{\problemname}{Problem}
\providecommand{\propositionname}{Proposition}
\providecommand{\remarkname}{Remark}
\providecommand{\theoremname}{Theorem}
\begin{document}
\global\long\def\goinf{\rightarrow\infty}%
\global\long\def\gozero{\rightarrow0}%
\global\long\def\bra{\langle}%
\global\long\def\ket{\rangle}%
\global\long\def\union{\cup}%
\global\long\def\intersect{\cap}%
\global\long\def\abs#1{\left|#1\right|}%
\global\long\def\norm#1{\left\Vert #1\right\Vert }%
\global\long\def\floor#1{\left\lfloor #1\right\rfloor }%
\global\long\def\ceil#1{\left\lceil #1\right\rceil }%
\global\long\def\expect{\mathbb{E}}%
\global\long\def\e{\mathbb{E}}%
\global\long\def\r{\mathbb{R}}%
\global\long\def\n{\mathbb{N}}%
\global\long\def\q{\mathbb{Q}}%
\global\long\def\c{\mathbb{C}}%
\global\long\def\z{\mathbb{Z}}%
\global\long\def\grad{\nabla}%
\global\long\def\t{\mathbb{T}}%
\global\long\def\all{\forall}%
\global\long\def\eps{\varepsilon}%
\global\long\def\quadvar#1{V_{2}^{\pi}\left(#1\right)}%
\global\long\def\cal#1{\mathcal{#1}}%
\global\long\def\cross{\times}%
\global\long\def\del{\nabla}%
\global\long\def\parx#1{\frac{\partial#1}{\partial x}}%
\global\long\def\pary#1{\frac{\partial#1}{\partial y}}%
\global\long\def\parz#1{\frac{\partial#1}{\partial z}}%
\global\long\def\part#1{\frac{\partial#1}{\partial t}}%
\global\long\def\partheta#1{\frac{\partial#1}{\partial\theta}}%
\global\long\def\parr#1{\frac{\partial#1}{\partial r}}%
\global\long\def\curl{\nabla\times}%
\global\long\def\rotor{\nabla\times}%
\global\long\def\one{\mathbf{1}}%
\global\long\def\Hom{\text{Hom}}%
\global\long\def\p{\mathbb{P}}%
\global\long\def\almost{\mathbf{\approx}}%
\global\long\def\tr{\text{Tr}}%
\global\long\def\var{\text{Var}}%
\global\long\def\cov{\text{Cov}}%
\global\long\def\onenorm#1{\left\Vert #1\right\Vert _{1}}%
\global\long\def\twonorm#1{\left\Vert #1\right\Vert _{2}}%
\global\long\def\Inj{\mathfrak{Inj}}%
\global\long\def\inj{\mathsf{inj}}%
\global\long\def\Inf{\mathrm{Inf}}%
\global\long\def\i{\mathrm{I}}%
\global\long\def\sign{\mathrm{sign}}%
\global\long\def\tensor{\otimes}%
\global\long\def\axis{\Delta_{\dagger}}%
\global\long\def\tribes{\mathrm{Tribes}}%

\global\long\def\g{\mathfrak{\cal G}}%
\global\long\def\f{\mathfrak{\cal F}}%
\global\long\def\nonnegative{\left[0,\infty\right)}%
\global\long\def\index#1#2{n\left(#1,#2\right)}%
\global\long\def\hypergeometric#1#2#3#4{_{2}F_{1}\left(#1,#2;#3;#4\right)}%

\title{The distance problem via subadditivity}
\author{Renan Gross\thanks{University of Cambridge. Email: rg751@cam.ac.uk}}
\maketitle
\begin{abstract}
In a recent paper, Aldous, Blanc and Curien asked which distributions
can be expressed as the distance between two independent random variables
on some separable measured metric space. We show that every nonnegative
discrete distribution whose support contains $0$ arises in this way,
as well as a class of finitely supported distributions with density.
\end{abstract}

\section{Introduction}

\subsection{The problem}

The \emph{distance problem} asks the following. Given a distribution
$\theta$ on $\nonnegative$, is it possible to find a complete separable
metric space $\left(S,d\right)$ and a Borel probability measure $\mu$
on $S$, so that when $X,Y\sim\mu$ are independent, their distance
satisfies $d\left(X,Y\right)\sim\theta$? If so, we say that $\left(S,d,\mu\right)$
\emph{achieves $\theta$, }and that $\theta$ is \emph{feasible}.
This very natural and quite inviting problem was first proposed by
Aldous, Blanc and Curien in \cite{aldous_blanc_curien_distance_problem_2024},
where they also give some statistical motivation.

\subsection{Examples}

While any $\left(S,d,\mu\right)$ gives some distribution $\theta$
which can in principle be calculated, the inverse problem, of finding
an explicit space which achieves a given distribution, is harder.
The following examples show how to achieve some common distributions,
and may give some intuition about the types of constructions one can
use.
\begin{enumerate}
\item \textbf{The uniform measure}. Let $S=S^{1}=\r\mod\z$ be the unit
circle, $\mu$ the uniform measure on $S^{1}$, and $d\left(x,y\right)$
the arc length between $x$ and $y$. Then $d\left(X,Y\right)\sim U\left[0,\frac{1}{2}\right]$.
\item \textbf{The exponential distribution}. Let $S=\r$, let $d\left(x,y\right)=\abs{x-y}$,
and let $\mu=\mathrm{Exp}\left(1\right)$ be the exponential distribution.
By the memoryless property, $\abs{X-Y}$ is also exponentially distributed.
\item \textbf{\label{enu:bernoulli_construction}Bernoulli distribution}.
Let $p_{0}>0$. In order to achieve the Bernoulli distribution $\theta=p_{0}\delta_{0}+\left(1-p_{0}\right)\delta_{1}$,
let $m\geq2$ be an integer and let $\alpha\in\left[0,1\right]$ to
be chosen later. Let $S=\left\{ 0,\ldots,m-1\right\} $, equipped
with the discrete metric 
\[
d\left(x,y\right)=\begin{cases}
0 & x=y\\
1 & x\neq y.
\end{cases}
\]
Let $\mu$ be the distribution of the random variable on $S$ whose
value is $0$ with probability $\alpha$, and uniform among $\left\{ 1,\ldots,m-1\right\} $
with probability $1-\alpha$. Then 
\begin{equation}
\p\left[d\left(X,Y\right)=0\right]=\alpha^{2}+\left(\frac{1-\alpha}{m-1}\right)^{2}.\label{eq:bernoulli_probability}
\end{equation}
Choosing $m$ so that the quadratic equation $p_{0}=\alpha^{2}+\left(\frac{1-\alpha}{m-1}\right)^{2}$
has a solution and solving for $\alpha$ gives us the desired probability
space. 
\item \textbf{The uniform measure, revisited}. Let $S=\left\{ 0,1\right\} ^{\n}$,
let $\mu$ be the measure on $S$ where all entries are i.i.d. $0$-1
Bernoulli random variables with success probability $1/2$, and let
$d\left(x,y\right)=\sum_{k=1}^{\infty}2^{-k}\abs{x_{i}-y_{i}}$. Then
$d\left(X,Y\right)\sim U\left[0,1\right]$. This is essentially a
more explicit version of the first example.
\item \textbf{The exponential distribution, revisited}. Let $S=\left(0,1\right]$,
$\mu$ the uniform measure on $S$, and $d\left(x,y\right)=\abs{\int_{x}^{y}\frac{1}{t}dt}=\abs{\log\frac{y}{x}}$.
A short calculation shows that $\p\left[d\left(X,Y\right)\leq t\right]=1-e^{-t}$,
and so $d\left(X,Y\right)\sim\exp\left(1\right)$. 
\item \textbf{Non-example}. Let $\theta$ be any distribution with density
on $\nonnegative$, and denote its cumulative distribution function
(CDF) by $F$. Let $S=\nonnegative$, let 
\[
d\left(x,y\right)=\begin{cases}
0 & x=y\\
\max\left(x,y\right) & x\neq y,
\end{cases}
\]
and let $\mu$ be the distribution whose CDF is $\sqrt{F}$. Since
$\theta$ has density, the event $\left\{ x=y\right\} $ has measure
$0$ under $\mu$, and so $d\left(X,Y\right)=\max\left(X,Y\right)$
with probability $1$. The CDF of the maximum of two i.i.d. random
variables is the square of their individual CDF, and so $d\left(X,Y\right)\sim\theta$.
However, while $\mu$ is a Borel measure with respect to the Euclidean
topology, it is not a Borel measure with respect to the topology induced
by $d$. Note also that $\left(S,d\right)$ is not separable (for
small enough $\eps$, the only element $\eps$-close to $x$ is $x$
itself).
\end{enumerate}

\subsection{Results}

In \cite{aldous_blanc_curien_distance_problem_2024}, Aldous, Blanc
and Curien showed some basic constraints on the support of the distribution
-- for example, it must contain $0$ -- and gave several constructions
and approximations. In their main constructions, the metric space
$S$ was a weighted tree and the measure $\mu$ a distribution on
its leaves / rays. Choosing the weights and distribution in a clever
way, they showed that every finite discrete distribution with $0$
in its support is feasible. However, when applied to non-finite distributions
via finite approximations, their approach yields non-separable spaces.

Our first result is a new construction which does not involve taking
limits of finite distributions, avoiding limiting questions of separability.
This result resolves the question of discrete distributions.
\begin{thm}
\label{thm:discrete_distributions}Every nonnegative discrete distribution
$\theta$ supported on $0$ is feasible. \hyperlink{target:proof_of_discrete}{({proof $\nearrow$})}
\end{thm}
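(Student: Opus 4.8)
The plan is to isolate a single combination step -- the ``subadditivity'' of the title -- and then bootstrap it. Concretely, I would first prove the following \emph{combination lemma}: if $\theta'$ is feasible on a complete separable space all of whose distances are bounded by some $D$, and if $a>D$ and $w\in(0,1)$, then the mixture $w\theta'+(1-w)\delta_a$ is again feasible on a complete separable space, now with distances bounded by $a$. The construction is the one suggested by Example~\ref{enu:bernoulli_construction}: take several disjoint copies of the space achieving $\theta'$, assign the copies masses $\beta_1,\beta_2,\ldots$ summing to $1$ with $\sum_i\beta_i^2=w$ (achievable for any $w\in(0,1)$ by the same quadratic-equation trick used for the Bernoulli distribution), and declare the distance between points in different copies to be the constant $a$. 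Two independent points then land in the same copy with probability $w$, producing $\theta'$, and in different copies with probability $1-w$, producing the atom $a$; the triangle inequality holds because $a>D$, and a finite disjoint union of complete separable spaces is again complete and separable.

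With this lemma in hand, every \emph{finite} distribution $\theta=\sum_{k=0}^{N}p_k\delta_{a_k}$ with $0=a_0<a_1<\cdots<a_N$ is handled by induction on $N$: starting from the one-point space achieving $\delta_0$, I peel off the largest atom, applying the lemma with $\theta'$ the rescaled truncation $\frac{1}{1-p_N}\sum_{k<N}p_k\delta_{a_k}$, weight $w=1-p_N$, and new atom $a=a_N$. This already reproves the Aldous--Blanc--Curien discrete result, but now through spaces that are manifestly complete and separable.

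The substance of the theorem is the infinite case, and here the whole point is to avoid approximating $\theta$ by finite distributions and passing to a limit of spaces, since such limits are exactly what destroy separability. Instead I would build one space directly. Iterating the combination lemma formally suggests a hierarchical (ultrametric) model: assign to each atom $a_k$ a ``level,'' let two independent points agree at level $k$ with conditional probability $w_k$ (independently across levels), and set the distance equal to the largest atom at which they disagree. A short telescoping computation fixes the parameters: writing $F$ for the CDF of $\theta$ and taking $w_k=F(a_{k-1})/F(a_k)\in(0,1)$ gives $\p[d(X,Y)\le a_k]=\prod_{l>k}w_l=F(a_k)$, so that $d(X,Y)\sim\theta$ \emph{exactly}, with no limiting approximation of the distribution itself.

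The hard part is not the distribution but the topology, and I expect the main obstacle to be proving that the resulting space is simultaneously complete and separable. When the atoms accumulate only at $0$ the hierarchical space is the boundary of a tree, and it is separable for the familiar reason that each small ball fixes only finitely many of the ``coarse'' coordinates; but when the support is unbounded (or accumulates away from $0$) every ball of small radius must still pin down infinitely many coarse coordinates, and the naive boundary-of-tree space becomes non-separable -- precisely the failure mode flagged for the non-example. The resolution I would pursue is to keep the carrier of $\mu$ \emph{countable}, so that separability is automatic, by placing the mass on a countable set of points equipped with a $\max$-type metric in the spirit of the sixth example (which, unlike the continuous case, is genuinely complete and separable once the underlying set is countable), while recovering the weight flexibility that the bare $\max$-metric lacks through the copy-and-spread device of the combination lemma. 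Completeness would then be checked by hand: any Cauchy sequence taking infinitely many distinct values must have its distances shrink to $0$, forcing convergence to a designated limit point, exactly as in that example. Verifying that such a countable, $\max$-flavored carrier can be engineered to realize the level probabilities $w_k$ for an arbitrary discrete $\theta$ is the step I expect to require the most care.
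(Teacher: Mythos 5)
Your combination lemma and the induction on finitely many atoms are fine, and your general philosophy for the infinite case --- a ``last level of disagreement'' metric, collision probabilities realized by the copy-spreading trick of Example \ref{enu:bernoulli_construction}, telescoping weights $w_k$ --- is genuinely close to the paper's selection-space construction. You also correctly identify completeness and separability as the crux. But the proposal stops short of a proof, and the resolution you commit to for the hard case is a dead end. The theorem covers discrete $\theta$ whose support contains $0$ only as an accumulation point of atoms, i.e.\ $\theta\left(\left\{ 0\right\} \right)=0$ (the paper's proof explicitly allows $p_{\infty}=0$). For such $\theta$ one needs $\p\left[d\left(X,Y\right)=0\right]=\p\left[X=Y\right]=\sum_{x}\mu\left(\left\{ x\right\} \right)^{2}=0$, so $\mu$ must be non-atomic; no countable carrier can achieve this, so ``keep the carrier countable so separability is automatic'' fails precisely in the case you flag as hardest. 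Relatedly, completeness genuinely fails for the set of finitely-supported configurations once atoms accumulate at $0$ (a sequence that switches on levels $a_{k}\to0$ one at a time is Cauchy with no limit in that set), and the completion one is forced to take is uncountable --- and is where $\mu$ actually lives when $\theta\left(\left\{ 0\right\} \right)=0$. Separability of that completion must therefore be argued by a different mechanism.

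There are two further gaps. Your level structure assumes the atoms can be enumerated in increasing order (you write $w_{k}=F\left(a_{k-1}\right)/F\left(a_{k}\right)$), but a discrete distribution can have order-dense support, e.g.\ $\left\{ 0\right\} \cup\left(\q\cap\left[1,2\right]\right)$, whose atoms admit no increasing enumeration indexed by integers; both ``the largest atom at which they disagree'' and the telescoping product then require a separate argument. The paper's device that dissolves all of these difficulties at once is the dyadic decomposition: group the atoms into scales $\left(2^{n},2^{n+1}\right]$, $n\in\z$. The scales are $\z$-ordered, so the last scale of disagreement is well defined; within a scale the atoms may be enumerated in an arbitrary order, because any two distances in $\left[a,2a\right]$ differ by at most a factor of $2$, which is exactly the slack needed for the triangle inequality (condition (\ref{eq:metrics_are_subadditive_assumption})); and separability of the uncountable space of eventually constant configurations follows because the diameters of the scales tend to $0$ as $n\to-\infty$ (condition (\ref{eq:assumption_separability})), not because the carrier is countable. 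Without a substitute for this decomposition, your sketch does not yet yield the theorem.
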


Rather than building a metric space $\left(S,d,\mu\right)$ which
directly achieves $\theta$ from scratch, our proof combines together
several simpler spaces $\left(S_{n},d_{n},\mu_{n}\right)$, allowing
us to sample from different $\theta_{n}$s with different probabilities.
The main challenge is to decompose $\theta$ into subdistributions
so that combining the spaces preserves the triangle inequality. As
will be described in the next section, this construction can also
be interpreted as a metric on the rays of a tree, where two rays are
close if they travel together for a long time, generalizing a commonly
used metric on the space of ends of a tree.

Our second result concerns distributions with density.
\begin{thm}
\label{thm:bounded_continuous_distributions}Let $g$ be a probability
density function on $\left[0,1\right]$, and suppose that there exist
constants $c,C>0$ such that 
\begin{equation}
c<g\left(t\right)<C\label{eq:bounds_on_density_function}
\end{equation}
for all $t\in\left[0,1\right]$. Then the distribution $\theta$ with
density $g$ is feasible. \hyperlink{target:proof_of_continuous}{({proof $\nearrow$})}
\end{thm}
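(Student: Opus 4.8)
The plan is to exhibit an explicit measured circle and to turn it into the right metric space by reparametrising arc length through a concave function; concavity is what supplies the triangle inequality, and is the sense in which the construction proceeds \emph{via subadditivity}. Concretely, let $S$ be a circle of circumference $2$ and let $\ell\left(x,y\right)\in\left[0,1\right]$ denote the arc-length (geodesic) distance, so that $\left(S,\ell\right)$ is compact. For any increasing concave $D\colon\left[0,1\right]\to\nonnegative$ with $D\left(0\right)=0$, subadditivity of $D$ gives $D\left(\ell\left(x,z\right)\right)\le D\left(\ell\left(x,y\right)+\ell\left(y,z\right)\right)\le D\left(\ell\left(x,y\right)\right)+D\left(\ell\left(y,z\right)\right)$, so $d:=D\circ\ell$ is again a metric; since $D$ is continuous and strictly increasing, $\left(S,d\right)$ is still compact, hence separable and complete, and any Borel measure for $\ell$ remains Borel for $d$. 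Feasibility of $\theta$ thus reduces to choosing a Borel probability measure $\mu$ on $S$ and a function $D$ as above so that $d\left(X,Y\right)\sim\theta$ for independent $X,Y\sim\mu$.

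For the matching step I would first record the law of the base distance. Writing $f$ for the density of $\mu$ with respect to arc length, a short computation (as in the circle examples above) shows that $\ell\left(X,Y\right)$ has a density $\phi$ on $\left[0,1\right]$ equal to twice the circular autocorrelation of $f$; in particular $\phi$ is an even, positive-definite density, and I will write $\Phi$ for its CDF. Then $d\left(X,Y\right)=D\left(\ell\left(X,Y\right)\right)$ has CDF $\Phi\circ D^{-1}$, so matching it to $F\left(t\right):=\int_0^t g$ forces the relation $D=F^{-1}\circ\Phi$. The whole problem is therefore to produce a \emph{realisable} base density $\phi$ --- one arising as the autocorrelation of a genuine probability density $f\ge0$ --- for which $F^{-1}\circ\Phi$ is increasing, concave, and vanishes at $0$. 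The vanishing and monotonicity are automatic; concavity is the real constraint.

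I expect this concavity requirement to be the main obstacle, and it is exactly here that the two-sided bound $c<g<C$ should enter. When $g$ happens to be nondecreasing one may simply take $\mu$ uniform, so that $\phi\equiv1$, $\Phi=\mathrm{id}$ and $D=F^{-1}$, which is concave precisely because $F$ is then convex. For general $g$ the bound $c<g<C$ makes $F^{-1}$ bi-Lipschitz with slopes confined to $\left[1/C,1/c\right]$, so that the distortion introduced by $F^{-1}$ is uniformly controlled; the plan is to absorb this bounded distortion by pre-shaping $\phi$ to be sufficiently concave (while keeping it positive-definite, e.g.\ as a small smooth perturbation of the uniform density), forcing the composition $F^{-1}\circ\Phi$ to be concave. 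The delicate point is that the two demands on $\phi$, namely positive-definiteness and enough concavity of the resulting $\Phi$, pull in opposite directions, and reconciling them is where the argument must do real work. Should a single circle prove too rigid for some $g$, the fallback is to partition $\left[0,1\right]$ into subintervals on which $g$ is more regular, realise each piece on its own circle, and glue the pieces along a tree, assigning a large distance to points living in different pieces; this is the same subadditive tree-of-ends combination used to prove Theorem~\ref{thm:discrete_distributions}, and checking the triangle inequality across pieces together with separability of the glued space would then be the principal task.
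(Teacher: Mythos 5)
Your overall architecture --- start from an explicitly realisable ``base'' distance, then reparametrise the metric by the increasing bijection matching its law to $\theta$, using subadditivity to keep the triangle inequality --- is exactly the paper's strategy (there the base space is $[0,1]$ with $d(x,y)=\abs{x-y}$ and a power-law measure rather than a circle, and the reparametrisation is $\varphi=G^{-1}\circ F_W$, which is your $F^{-1}\circ\Phi$). But the step you yourself flag as ``where the argument must do real work'' is a genuine gap, and moreover the target you set there --- concavity of $F^{-1}\circ\Phi$ --- is unattainable in general, so no choice of $\phi$ can close the gap along your route. The derivative of $F^{-1}\circ\Phi$ is $\Phi'(t)/g\left(F^{-1}(\Phi(t))\right)$. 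The hypothesis only pins $g$ between $c$ and $C$; it may oscillate between values near $c$ and values near $C$ on arbitrarily fine scales everywhere in $[0,1]$. Concavity of the composition would then force $\Phi'$ to drop by a fixed factor of at least $C/c>1$ across arbitrarily short intervals located densely in $[0,1]$, which is impossible for the derivative of a distribution function. So ``pre-shaping $\phi$ to be sufficiently concave'' cannot work: no amount of concavity of $\Phi$ compensates for a non-monotone $1/g$.

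The missing idea is to aim for the weaker property that actually suffices for metric preservation, namely subadditivity of the composition, and to obtain it quantitatively. The paper proves: if $\Psi$ has derivative pinched in $(m,M)$ (here $\Psi=G^{-1}$, $m=1/C$, $M=1/c$) and the inner function satisfies the strengthened inequality $f(x+y)\le\frac{m}{M}f(x)+f(y)$ for $x\le y$, then $\Psi\circ f$ is subadditive --- a three-line computation writing $\Psi(f(x+y))-\Psi(f(y))$ as an integral of $\Psi'$. It then constructs, for each $\eps\in(0,1)$, a feasible $W$ with $F_W(x+y)\le\eps F_W(x)+F_W(y)$ for $x\le y$: take $W=\abs{X-Y}$ with $X,Y$ i.i.d.\ with CDF $t^{\alpha}$ on $[0,1]$, so that $F_W(t)\ge t^{2\alpha}$ and $tF_W'(t)/F_W(t)\le4\alpha$, and choose $\alpha$ small. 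Your circle-with-autocorrelation formulation could in principle be adapted to this weaker target (a base density with a singularity at the diagonal plays the role of your ``very concave $\Phi$''), but as written the proposal both over-constrains the problem (concavity instead of subadditivity) and leaves the decisive inequality unproved; the tree-of-circles fallback does not address this either.
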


The theorem is derived by altering the metric of a known-to-be-feasible
distribution; here, subadditivity is used in order to preserve the
metric. The theorem represents one step towards answering the open
problem in \cite{aldous_blanc_curien_distance_problem_2024}:
\begin{problem}
Prove that for every probability density function $f$ on $\nonnegative$
whose support contains $0$, the distribution with density $f$ is
feasible.
\end{problem}

\section{Proof of Theorem \ref{thm:discrete_distributions}}

\hypertarget{target:proof_of_discrete}Our proof uses the following
construction, which, given several feasible distributions, returns
a mixture of these distributions. The resultant distance can be thought
of as being computed by a ``selection'' mechanism: we linearly order
the distributions, and iteratively flip a coin for each one to see
whether or not it is the one that we sample from; if none of the distributions
are chosen, the default $0$ is returned. 

\paragraph{Selection space}

Denote the integers by $\z$ and the positive integers by $\n=\left\{ 1,2,\ldots\right\} $.
We say that a triplet $\left(S,d,\mu\right)$ is a measured metric
space if $\left(S,d\right)$ is a metric space and $\mu$ is a Borel
probability measure on $S$ with respect to $d$.

Let $I\subseteq\z$, and for each $n\in I$, let $\left(S_{n},d_{n},\mu_{n}\right)$
be a complete countable measured metric space with more than $1$
point. Assume that for every $n\in I\intersect\n$ there exists a
special $s_{n}\in S_{n}$ such that 
\begin{equation}
\sum_{n\in I\intersect\n}1-\mu_{n}\left(\left\{ s_{n}\right\} \right)<\infty.\label{eq:prob_sums_are_finite}
\end{equation}
We now define a new space $\left(S,d,\mu\right)$ out of these spaces;
later, in Proposition \ref{prop:selection_space_properties}, we give
conditions under which it is a complete separable measured metric
space. 
\begin{enumerate}
\item \textbf{The space}. We say that a vector $x\in\prod_{n\in I}S_{n}$
is \emph{eventually constant} if there exists an $N>0$ such that
$x_{n}=s_{n}$ for all $n\geq N$ (we still index the vector $x$
by the set $I$). Note that when $\sup I<\infty$, every vector is
eventually constant. We set 
\[
S=\left\{ x\in\prod_{n\in I}S_{n}\mid\text{\ensuremath{x} is eventually constant}\right\} .
\]
\item \textbf{The function }$d$. For $x\neq y\in S$, let $\index xy=\sup\left\{ n\mid x_{n}\neq y_{n}\right\} $.
This is always finite since $x$ and $y$ are eventually constant.
We then define 
\[
d\left(x,y\right)=\begin{cases}
0 & x=y\\
d_{\index xy}\left(x_{\index xy},y_{\index xy}\right) & x\neq y.
\end{cases}
\]
\item \textbf{The measure}. Let $\tilde{\mu}=\tensor_{n\in I}\mu_{n}$ be
the product measure on $\prod_{n\in I}S_{n}$. By (\ref{eq:prob_sums_are_finite})
the sum $\sum_{n\in I\intersect\n}1-\mu_{n}\left(\left\{ s_{n}\right\} \right)$
is finite, and the Borel-Cantelli lemma implies that $X\sim\tilde{\mu}$
is eventually constant with probability $1$ under $\tilde{\mu}$.
The measure $\tilde{\mu}$ can therefore be restricted to a measure
$\mu$ on $S$.
\end{enumerate}
We call the tuple $\left(S,d,\mu\right)$ the \emph{selection space}
of $\left\{ \left(S_{n},d_{n},\mu_{n}\right)\right\} _{n\in I}$.
Let us find the distribution of $d\left(X,Y\right)$ when $X,Y\sim\mu$
are independent. The probability of obtaining $0$ is  
\begin{equation}
\p\left[d\left(X,Y\right)=0\right]=\prod_{n\in I}\p\left[X_{n}=Y_{n}\right]=\prod_{n\in I}\p\left[d_{n}\left(X_{n},Y_{n}\right)=0\right],\label{eq:prob_of_0}
\end{equation}
while for $r>0$, 
\begin{align}
\p\left[d\left(X,Y\right)=r\right] & =\sum_{n\in I}\p\left[n\left(X,Y\right)=n\right]\cdot\p\left[d_{n}\left(X_{n},Y_{n}\right)=r\mid\index XY=n\right]\nonumber \\
 & =\sum_{n\in I}\left(\prod_{k\in I,k>n}\p\left[X_{k}=Y_{k}\right]\right)\p\left[X_{n}\neq Y_{n}\right]\cdot\p\left[d_{n}\left(X_{n},Y_{n}\right)=r\mid X_{n}\neq Y_{n}\right]\nonumber \\
 & =\sum_{n\in I}\left(\prod_{k\in I,k>n}\p\left[d\left(X_{k},Y_{k}\right)=0\right]\right)\p\left[d_{n}\left(X_{n},Y_{n}\right)=r\right],\label{eq:prob_of_positive}
\end{align}
and we have indeed obtained a mixture of positive elements of the
distributions achieved by $\left(S_{n},d_{n},\mu_{n}\right)$.
\begin{rem}
As a space of vectors indexed by $I\subseteq\z$, the selection space
can also be interpreted as the steps of a bi-directional walk, where
at time-step $n$, the walker may go in the ``direction'' $x_{n}\in S_{n}$.
When $\sup I$ is finite and we view time as starting at $\sup I$
and directed down towards $\inf I$, the index $\index xy$ is the
first time that two walkers $x$ and $y$ split up when starting at
a common origin. In this case the elements of $S$ can be seen as
the space of ends on some rooted tree, and the function $d$ is a
generalization of the metric $e^{-h\left(x,y\right)}$ on the space
of ends, where $h\left(x,y\right)$ is the depth of the last common
vertex of the rays $x$ and $y$. See Figure \ref{fig:space_of_ends_metric}.
When $\sup I=\infty$, however, there is no such common root. 

\begin{figure}[h]
\begin{centering}
\includegraphics[scale=0.75]{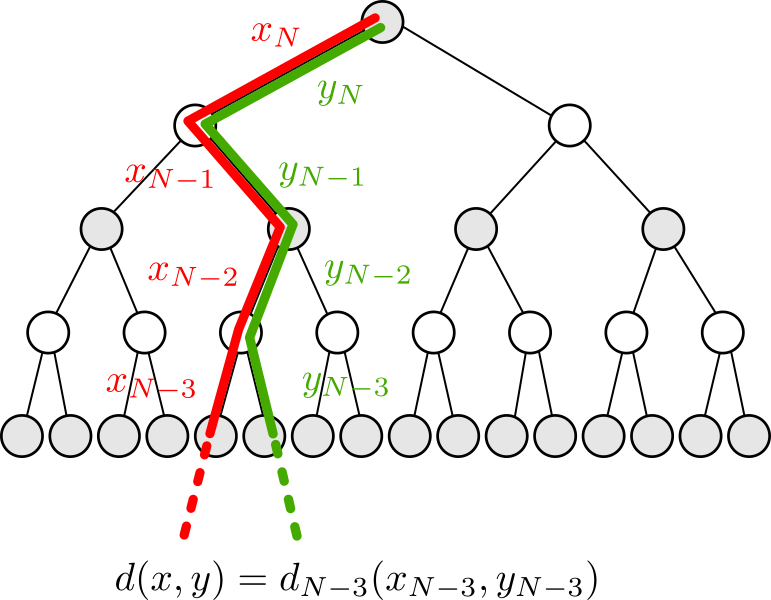}
\par\end{centering}
\caption{\label{fig:space_of_ends_metric}When $\sup I=N<\infty$, the function
$d$ is a generalization of a class of metrics on the space of ends
of a tree rooted, where the distance between two rays from the origin
depends on the depth of their last common vertex.}

\end{figure}
\end{rem}

\begin{prop}
\label{prop:selection_space_properties}Let $I\subseteq\z$ and $\left\{ \left(S_{n},d_{n},\mu_{n}\right)\right\} _{n\in I}$
be as above. Assume that the following two properties hold.
\begin{enumerate}
\item For every $m<n\in I$, every $x_{1}\neq x_{2}\in S_{m}$ and every
$y_{1}\neq y_{2}\in S_{n}$, 
\begin{equation}
d_{m}\left(x_{1},x_{2}\right)\leq2d_{n}\left(y_{1},y_{2}\right).\label{eq:metrics_are_subadditive_assumption}
\end{equation}
\item We have
\begin{equation}
\text{either }\inf I>-\infty\quad\text{ or }\quad\lim_{n\to-\infty}\sup_{z,w\in S_{n}}d_{n}\left(z,w\right)=0.\label{eq:assumption_separability}
\end{equation}
\end{enumerate}
Then the selection space $\left(S,d,\mu\right)$ is a complete separable
measured metric space. 
\end{prop}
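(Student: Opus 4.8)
The plan is to verify the four requirements in turn: that $d$ is a genuine metric, that $(S,d)$ is complete, that it is separable, and that $\mu$ is a Borel probability measure for the topology induced by $d$. Symmetry and the fact that $d(x,y)=0$ exactly when $x=y$ are immediate from the corresponding properties of each $d_{n}$, so the heart of the metric axioms --- and the step I expect to be the main obstacle --- is the triangle inequality. The organizing observation is that the index functional behaves like a tree-height: for distinct $x,y,z$ one has $\index xz\le\max\left(\index xy,\index yz\right)$, since above that maximum $x,y$ and $y,z$ all agree coordinatewise and hence so do $x,z$.

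I would prove the triangle inequality by a short case analysis on $a=\index xy$ and $b=\index yz$. If $a\neq b$, say $a>b$, then at index $a$ the coordinates of $y$ and $z$ agree while $x$ differs from both, forcing $\index xz=a$ and $d(x,z)=d_{a}(x_{a},z_{a})=d_{a}(x_{a},y_{a})=d(x,y)$, which is trivially at most $d(x,y)+d(y,z)$. If $a=b=:n$ and $x,z$ still differ at $n$, then all three distances are computed inside the single space $(S_{n},d_{n})$ and the inequality is just the triangle inequality there. The remaining case --- $a=b=n$ but $x_{n}=z_{n}$, so that $\index xz<n$ --- is exactly where the subadditivity hypothesis (\ref{eq:metrics_are_subadditive_assumption}) is needed: it gives $d(x,z)\le2d_{n}(x_{n},y_{n})=2d(x,y)$ and symmetrically $d(x,z)\le2d(y,z)$, whence $d(x,z)\le2\min\left(d(x,y),d(y,z)\right)\le d(x,y)+d(y,z)$. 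The factor $2$ in the assumption is tuned precisely to close this case.

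For completeness I would first record the coordinatewise comparison $d_{n}(x_{n},y_{n})\le2d(x,y)$, valid for every $n\in I$: if $x_{n}\neq y_{n}$ then $\index xy\ge n$, and either $\index xy=n$ (so the two sides are equal) or $\index xy>n$ and (\ref{eq:metrics_are_subadditive_assumption}) applies. Given a Cauchy sequence, this bound makes each coordinate Cauchy in the complete space $S_{n}$, producing a coordinatewise limit $x^{*}$. To see that $x^{*}$ lies in $S$, I would use a further consequence of (\ref{eq:metrics_are_subadditive_assumption}): fixing any $m_{0}\in I$, every space $S_{n}$ with $n>m_{0}$ has minimal interpoint distance at least $\frac{1}{2}\mathrm{diam}(S_{m_{0}})>0$, so once the Cauchy tail has diameter below this threshold the sequence is frozen on all coordinates above $m_{0}$; matching it to one eventually-constant $x^{(K)}$ shows $x^{*}$ is eventually constant. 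Finally, convergence $x^{(k)}\to x^{*}$ follows by fixing $k$, setting $q=\index{x^{(k)}}{x^{*}}$, and passing to the limit in $j$ inside $d_{q}(x_{q}^{(k)},x_{q}^{(j)})\le2d(x^{(k)},x^{(j)})$ using continuity of $d_{q}$.

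For separability I would split on the dichotomy (\ref{eq:assumption_separability}). If $\inf I>-\infty$ then every eventually-constant vector has only finitely many coordinates different from its $s_{n}$, so $S$ is a countable union of finite products of countable sets, hence countable and trivially separable. If instead $\mathrm{diam}(S_{n})\to0$ as $n\to-\infty$, I would take the countable set of vectors that are in addition constant on a left tail, and approximate a given $x$ by truncating its low coordinates to a fixed reference point; the error is bounded by the diameter of some $S_{n}$ with $n$ small, which tends to $0$. Lastly, for the measure I would check that each $d$-ball $B(x,r)$ is measurable for the product $\sigma$-algebra by writing it as the countable union over $q\in I$ of the cylinder sets $\left\{ y:\ y_{n}=x_{n}\ \forall n>q,\ d_{q}(x_{q},y_{q})<r\right\} $; separability then upgrades this to the inclusion of the $d$-Borel sets in the product-measurable sets, so the product measure $\tilde{\mu}$ restricts to a Borel probability measure $\mu$ on $(S,d)$, completing the proof.
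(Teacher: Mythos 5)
Your proof is correct and follows essentially the same route as the paper's: the same three-way case analysis for the triangle inequality (with the factor $2$ from (\ref{eq:metrics_are_subadditive_assumption}) closing exactly the case where the outer pair splits at a strictly lower index), the same freezing argument via a positive lower bound on interpoint distances for completeness, the same countable dense set, and the same cylinder-set decomposition of open balls for measurability. One small correction: your cylinder sets must additionally require $y_{q}\neq x_{q}$ (as in the paper's formula), since otherwise the union over $q$ strictly contains $B\left(x,r\right)$ --- a point $y$ agreeing with $x$ on all coordinates $\geq q$ but far from $x$ at some lower index would be wrongly included --- and with that condition added the measurability argument goes through exactly as you claim.
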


\begin{proof}
\textbf{Metric}: It is clear that $d\left(x,y\right)=0$ if and only
if $x=y$, so we only need to verify the triangle inequality. Let
$x,y,z\in S$ be distinct, and denote $b=\index xy$.
\begin{enumerate}
\item If either $\index yz>b$ or $\index xz>b$, then since $x$ and $y$
agree on indices greater than $b$, we must have $\index xz=\index yz>b$,
and also $d\left(x,z\right)=d\left(y,z\right)$. Then by (\ref{eq:metrics_are_subadditive_assumption}),
\[
d\left(x,y\right)=d_{b}\left(x_{b},y_{b}\right)\leq2d_{\index xz}\left(x_{\index xz},z_{\index xz}\right)=d\left(x,z\right)+d\left(y,z\right).
\]
\item If both $n\left(y,z\right)=b$ and $n\left(x,z\right)=b$, then the
triangle inequality is satisfied because $d_{b}$ is a metric.
\item Otherwise, we have $n\left(y,z\right),n\left(x,z\right)\leq b$, and
at least one of $n\left(y,z\right)$ or $n\left(x,z\right)$ is strictly
smaller than $b.$ But it is impossible for both to be strictly smaller,
since that would mean that $x_{b}=z_{b}=y_{b}$, contradicting the
fact that $x_{b}\neq y_{b}$. Supposing w.l.o.g. that $\index xz=b$,
we then trivially have
\[
d\left(x,y\right)=d_{b}\left(x_{b},y_{b}\right)=d_{b}\left(x_{b},z_{b}\right)=d\left(x,z\right)\leq d\left(x,z\right)+d\left(y,z\right).
\]
\end{enumerate}
\textbf{Completeness}: Let $N\in I$, and let $a\neq b\in S_{N}$.
Then by (\ref{eq:metrics_are_subadditive_assumption}), for every
$n>N$ and every $w,z\in S_{n}$, $d_{n}\left(w,z\right)\geq\frac{1}{2}d_{N}\left(a,b\right)$.
Thus, for every $N\in I$,
\begin{equation}
\inf_{n>N}\inf_{w,z\in S_{n}}d_{n}\left(w,z\right)>0.\label{eq:inf_on_tail}
\end{equation}
Let $\left(x^{\left(k\right)}\right)_{k\in\n}$ be a Cauchy sequence
in $S$, and let $L=\inf I$. When $L=-\infty$, (\ref{eq:inf_on_tail})
implies that if $d\left(x^{\left(k_{1}\right)},x^{\left(k_{2}\right)}\right)<\eps$
for some $k_{1}$ and all $k_{2}\geq k_{1}$ and small enough $\eps$,
then $x_{n}^{\left(k_{2}\right)}=x_{n}^{\left(k_{1}\right)}$ for
all $n>N$ for some $N\in I$, with $N\to-\infty$ as $\eps\to0$.
Thus $x^{\left(k\right)}$ converges to an element in $S$. When $L>-\infty$,
(\ref{eq:inf_on_tail}) implies that for small enough $\eps$, $x_{n}^{\left(k_{2}\right)}=x_{n}^{\left(k_{1}\right)}$
for all $n>L$, while the first elements $\left(x_{L}^{\left(k\right)}\right)_{k\in\n}$
form a Cauchy sequence in $S_{L}$. Since $S_{L}$ is itself complete,
we again get that $x^{\left(k\right)}$ converges to an element in
$S$. 

\textbf{Separability}: If $\inf I>-\infty$, then $S$ is a countable
union of countable sets, and so is countable itself. Otherwise, by
(\ref{eq:assumption_separability}), $\lim_{n\to-\infty}\sup_{z,w\in S_{n}}d\left(z,w\right)=0.$
For each negative $n\in I$, let $a^{\left(n\right)}\in S_{n}$, and
set 
\[
A_{n}=\left\{ x\in S\mid x_{i}=a^{\left(i\right)}\,\all i\leq n\right\} .
\]
Since all vectors in $S$ are eventually constant, each $A_{n}$ is
a countable union of countable sets and is therefore countable, and
so the union $\union_{n\in I}A$ is also countable. It is also dense
in $S$: for any $x\in S$ and any $n\in I$, the set $A_{n}$ contains
an element $y$ such that $y_{k}=x_{k}$ for all $k>n$, and so $n\left(x,y\right)\leq n$;
denseness follows since $\sup_{z,w\in S_{n}}d_{n}\left(z,w\right)\to0$
as $n\to-\infty$.

\textbf{Borel measure}: Under the metric $d$, an open ball around
$x\in S$ of radius $r$ is given by 
\[
B\left(x,r\right)=\bigcup_{N\in I}\,\,\,\bigcup_{s\in S_{N}\mid d_{N}\left(x_{N},s\right)<r,s\neq x_{N}}\left\{ y\in S\mid\text{\ensuremath{y_{N}=s},\ensuremath{y_{n}=x_{n}} for all \ensuremath{n>N}}\right\} \union\left\{ x\right\} .
\]
Each set of the form $\left\{ y\in S\mid\text{\ensuremath{y_{N}=s},\ensuremath{y_{n}=x_{n}} for all \ensuremath{n>N}}\right\} $
is measurable under the product measure $\tilde{\mu}$, and therefore
so is the countable union of such sets. The restriction measure $\mu$
is therefore defined on the $\sigma$-algebra generated by the open
balls in $S$. Since $\left(S,d\right)$ is separable, $\mu$ is Borel.
\end{proof}
\begin{prop}
\label{prop:positive_interval_is_feasible}Let $a>0$, let $p_{0}\in\left(0,1\right)$
and let $\lambda$ be a discrete probability distribution with $\mathrm{supp}\left(\lambda\right)\subseteq$$\left[a,2a\right]$.
Then the distribution $\theta=p_{0}\delta_{0}+\left(1-p_{0}\right)\lambda$
is feasible. 
\end{prop}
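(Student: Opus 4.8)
The plan is to exhibit $\theta$ as the law of $d(X,Y)$ on a selection space assembled from one scaled Bernoulli piece per atom of $\lambda$, and then to invoke Proposition \ref{prop:selection_space_properties}. Write $\lambda=\sum_{n\ge 1}q_n\delta_{r_n}$ with distinct atoms $r_n\in[a,2a]$ and weights $q_n>0$ summing to $1$ (the index set being finite or all of $\n$). For each $n$ I would take $(S_n,d_n,\mu_n)$ to be the Bernoulli space of Example \ref{enu:bernoulli_construction} with its discrete metric multiplied by $r_n$, so that $d_n$ takes only the values $0$ and $r_n$; that example shows such a space realizes any two-point law $(1-\beta_n)\delta_0+\beta_n\delta_{r_n}$ with $\beta_n\in(0,1)$. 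Taking $I=\{1,2,\dots\}$ and feeding in these pieces, formulas (\ref{eq:prob_of_0}) and (\ref{eq:prob_of_positive}) give $\p[d(X,Y)=0]=\prod_n(1-\beta_n)$ and, because the value $r_n$ can only be produced by the $n$-th coordinate, $\p[d(X,Y)=r_n]=\bigl(\prod_{k>n}(1-\beta_k)\bigr)\beta_n$.

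It then remains to solve for the $\beta_n$. Setting $T_n=\prod_{k>n}(1-\beta_k)$, the target identities $T_n\beta_n=(1-p_0)q_n$ and $T_{n-1}=(1-\beta_n)T_n$ telescope and force $T_n=p_0+(1-p_0)\sum_{k\le n}q_k$, whence $\beta_n=(1-p_0)q_n/T_n$. I would check that $\beta_n\in(0,1)$: positivity is clear, while $T_n-(1-p_0)q_n=p_0+(1-p_0)\sum_{k<n}q_k\ge p_0>0$ gives $\beta_n<1$. The telescoping partial products $\prod_{n\le N}(1-\beta_n)=T_0/T_N=p_0/T_N\to p_0$ confirm $\p[d(X,Y)=0]=p_0$, so the achieved law is exactly $p_0\delta_0+(1-p_0)\lambda=\theta$.

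To apply Proposition \ref{prop:selection_space_properties} I must verify its two hypotheses, and this is where the support assumption enters. For the subadditivity condition (\ref{eq:metrics_are_subadditive_assumption}), when $m<n$ the only nonzero distances involved are $r_m\le 2a$ and $r_n\ge a$, so $d_m(x_1,x_2)=r_m\le 2a\le 2r_n=2d_n(y_1,y_2)$; crucially this holds for \emph{every} labeling of the atoms, precisely because all of $\mathrm{supp}(\lambda)$ lies within a factor of two. The separability condition (\ref{eq:assumption_separability}) is immediate since $\inf I=1>-\infty$. Finally I would secure the eventual-constancy requirement (\ref{eq:prob_sums_are_finite}) by taking $s_n=0$, so that $\mu_n(\{s_n\})=\alpha_n$ is the mass Example \ref{enu:bernoulli_construction} assigns to $0$: using $m_n=2$ for all but finitely many $n$ (permissible since $\beta_n\to 0$) and the larger root $\alpha_n=\tfrac12(1+\sqrt{1-2\beta_n})$, one has $1-\alpha_n\le\beta_n$, and since $\sum_n\beta_n\le\frac{1-p_0}{p_0}\sum_n q_n<\infty$ the series in (\ref{eq:prob_sums_are_finite}) converges.

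The geometric heart of the argument—the triangle inequality via subadditivity—is essentially handed to us by the $[a,2a]$ hypothesis, so I expect the real work to be the probabilistic bookkeeping: solving the telescoping recursion to pin down the $\beta_n$, confirming they lie in $(0,1)$, and verifying the summability needed in (\ref{eq:prob_sums_are_finite}) for the infinite-atom case. The finite-atom case is strictly easier, since then $I\intersect\n$ is finite and (\ref{eq:prob_sums_are_finite}) holds trivially.
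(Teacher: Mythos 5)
Your proposal is correct and takes essentially the same route as the paper: the same one-scaled-Bernoulli-piece-per-atom selection space, the same weights (your $\beta_{n}=(1-p_{0})q_{n}/\bigl(p_{0}+(1-p_{0})\sum_{k\le n}q_{k}\bigr)$ is exactly the paper's $q_{n}=p_{n}/(p_{0}+p_{1}+\cdots+p_{n})$), and the same three verifications of the hypotheses of Proposition \ref{prop:selection_space_properties}. The only cosmetic difference is in securing (\ref{eq:prob_sums_are_finite}): you fix $m=2$ and take the larger root explicitly to get $1-\alpha_{n}\le\beta_{n}$, whereas the paper argues for general $m$ that one may choose $\alpha\ge1-q_{n}$; both yield the same summable bound.
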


\begin{proof}
Let $a_{0}=0$, and let $\left(a_{n}\right)_{n\in I}$ be the positive
atoms of $\lambda$ arranged in some arbitrary order; here, $I=\n$
if there is an infinite number of atoms and $I=\left[N\right]$ for
some $N$ otherwise. For every $n\in I$, denote $p_{n}=\theta\left(\left\{ a_{n}\right\} \right)$,
and define $q_{n}$ by
\[
q_{n}=\frac{p_{n}}{p_{0}+p_{1}+\ldots+p_{n}}.
\]
For $n\in I$, let $\left(S_{n},d_{n},\mu_{n}\right)$ be the metric
space corresponding to the Bernoulli measure $\theta_{n}=\left(1-q_{n}\right)\delta_{0}+q_{n}\delta_{a_{n}}$,
obtained by multiplying the metric described in the examples section
by $a_{n}$. Then the assumption in the construction of the selection
space and the two conditions in Proposition \ref{prop:selection_space_properties}
hold for $\left\{ \left(S_{n},d_{n},\mu_{n}\right)\right\} _{n\in I}$:
\begin{enumerate}
\item When constructing a Bernoulli random variable $p\delta_{0}+\left(1-p\right)\delta_{1}$with
probability $p$ of obtaining $0$ as in example \ref{enu:bernoulli_construction},
the parameters $m$ and $\alpha$ may be chosen so that $\alpha\geq p$.
Indeed, setting $f_{m}\left(x\right)=x^{2}+\left(\frac{1-x}{m-1}\right)^{2}$,
we have $p=f_{m}\left(\alpha\right)$. Since $\lim_{m\to\infty}f_{m}\left(p\right)=p^{2}<p$,
there exists an $m$ large enough so that $f_{m}\left(p\right)<p$,
and by continuity of $f_{m}$ and the fact that $f_{m}\left(1\right)=1$,
there exists $\alpha\in\left[p,1\right]$ with $f_{m}\left(\alpha\right)=p$
as required. This shows that the probability to obtain $0$ under
$\mu_{n}$ is bounded by
\begin{equation}
\mu_{n}\left(\left\{ 0\right\} \right)\geq1-q_{n}.\label{eq:bound_on_q}
\end{equation}
Thus 
\begin{equation}
\sum_{n\in I}1-\mu_{n}\left(\left\{ 0\right\} \right)\leq\sum_{n\in I}q_{n}=\sum_{n\in I}\frac{p_{n}}{p_{0}+p_{1}+\ldots+p_{n}}\leq\sum_{n\in I}\frac{p_{n}}{p_{0}}=\frac{1-p_{0}}{p_{0}},\label{eq:sum_of_q_is_finite}
\end{equation}
and the sum in (\ref{eq:prob_sums_are_finite}) is finite. 
\item Since $a_{n}\in\left[a,2a\right]$, the requirement (\ref{eq:metrics_are_subadditive_assumption})
is immediate: the left-hand side of (\ref{eq:metrics_are_subadditive_assumption})
is at most $2a$, while the right-hand side is at least $2a$. 
\item $\inf I=1>-\infty$ as required in (\ref{eq:assumption_separability}).
\end{enumerate}
Thus, the selection space $\left(S,d,\mu\right)$ constructed from
$\left\{ \left(S_{n},d_{n},\mu_{n}\right)\right\} _{n\in I}$ is a
complete separable measured metric space. Since the positive atoms
of the $\mu_{n}$s are all distinct, equations (\ref{eq:prob_of_0})
and (\ref{eq:prob_of_positive}) imply that 
\begin{equation}
\p\left[d\left(X,Y\right)=0\right]=\prod_{n\in I}\p\left[d_{n}\left(X_{n},Y_{n}\right)=0\right]=\prod_{n\in I}\left(1-q_{n}\right)=p_{0},\label{eq:single_prob_zero}
\end{equation}
and
\[
\p\left[d\left(X,Y\right)=a_{n}\right]=q_{n}\prod_{i\in I,i>n}\left(1-q_{i}\right)=p_{n}
\]
as needed.

We remark that by (\ref{eq:bound_on_q}) and (\ref{eq:single_prob_zero}),
\begin{equation}
\p\left[X=\left(0,0,\ldots\right)\right]=\prod_{n\in I}\mu_{n}\left(\left\{ 0\right\} \right)\geq\prod_{n\in I}\left(1-q_{n}\right)=p_{0}.\label{eq:entire_vector_is_0_whp}
\end{equation}
\end{proof}
\begin{proof}[Proof of Theorem \ref{thm:discrete_distributions}]
The measure $\theta$ can be written as 
\[
\theta=p_{\infty}\delta_{0}+\sum_{n\in I}p_{n}\lambda_{n},
\]
where $I\subseteq\z$, and for every $n\in I$, $p_{n}>0$ and the
measure $\lambda_{n}$ is discrete and supported on $\left(2^{n},2^{n+1}\right]$.
Note that if $p_{\infty}=0$, then necessarily $\inf I=-\infty$.
Define the numbers 
\[
\beta_{n}=\frac{p_{n}}{1-\sum_{i\in I,i>n}p_{i}},
\]
and let $\left(S_{n},d_{n},\mu_{n}\right)$ be the metric space defined
in the proof of Proposition \ref{prop:positive_interval_is_feasible}
corresponding to the measure $\theta_{n}=\left(1-\beta_{n}\right)\delta_{0}+\beta_{n}\lambda_{n}$.
Note that each $S_{n}$ is countable. Then the assumption in the construction
of the selection space and the two conditions in Proposition \ref{prop:selection_space_properties}
hold for $\left\{ \left(S_{n},d_{n},\mu_{n}\right)\right\} _{n\in I}$:
\begin{enumerate}
\item If $\sup I<\infty$ then of course the sum in (\ref{eq:prob_sums_are_finite})
is finite. Otherwise, by (\ref{eq:entire_vector_is_0_whp}), if $X_{n}\sim\mu_{n}$,
then 
\[
\p\left[X_{n}=\left(0,0,\ldots\right)\right]\geq1-\beta_{n}.
\]
Thus
\[
\sum_{n\in I\intersect\n}\p\left[X_{n}\neq\left(0,0,\ldots\right)\right]\leq\sum_{n\in I\intersect\n}1-\left(1-\beta_{n}\right)=\sum_{n\in I\intersect\n}\beta_{n}\leq\sum_{n\in I\intersect\n}\frac{p_{n}}{1-\sum_{i\in I,i>n}p_{i}}\leq\frac{1}{p_{\infty}+\sum_{i\in I,i<1}p_{i}}.
\]
\item Since the range of $d_{n}$ is $\left(2^{n},2^{n+1}\right]$, the
requirement (\ref{eq:metrics_are_subadditive_assumption}) is immediate;
in fact, the left hand side of (\ref{eq:metrics_are_subadditive_assumption})
is bounded by just half of the right hand side.
\item $\lim_{n\to-\infty}\sup_{z,w\in S_{n}}d_{n}\left(z,w\right)=\lim_{n\to-\infty}2^{n+1}=0$
as required in (\ref{eq:assumption_separability}).
\end{enumerate}
Thus, the selection space $\left(S,d,\mu\right)$ constructed from
$\left\{ \left(S_{n},d_{n},\mu_{n}\right)\right\} _{n\in I}$ is a
complete separable measured metric space. Since the supports of $\lambda_{n}$
are disjoint, equations (\ref{eq:prob_of_0}) and (\ref{eq:prob_of_positive})
imply that

\[
\p\left[d\left(X,Y\right)=0\right]=\prod_{n\in I}\p\left[X_{n}=Y_{n}\right]=\prod_{n\in I}\left(1-\beta_{n}\right)=p_{\infty},
\]
while the probability to sample from $\lambda_{n}$ is exactly $\beta_{n}\prod_{i\in I,i>n}\left(1-\beta_{i}\right)=p_{n}$,
as needed. 
\end{proof}

\section{Proof of Theorem \ref{thm:bounded_continuous_distributions}}

\hypertarget{target:proof_of_continuous}Our proof first finds an
easy-to-construct feasible ``starter'' distribution, and then changes
its metric in order to obtain the distribution $\theta$. The idea
is as follows. Let $W$ be a random variable on $\left[0,1\right]$
whose cumulative distribution function $F_{W}$ is continuous and
strictly increasing on $\left[0,1\right]$, and let $\varphi:\left[0,1\right]\to\left[0,1\right]$
be a strictly increasing bijection. The cumulative distribution function
of the random variable $\varphi\left(W\right)$ is given by
\[
F_{\varphi\left(W\right)}\left(t\right)=\p\left[\varphi\left(W\right)\leq t\right]=\p\left[W\leq\varphi^{-1}\left(t\right)\right]=F_{W}\left(\varphi^{-1}\left(t\right)\right).
\]
Denoting by $G$ the cumulative distribution function of $\theta$
and choosing $\varphi\left(t\right)=G^{-1}\left(F_{W}\left(t\right)\right)$
gives us, for $t\in\left[0,1\right]$, $F_{W}\left(\varphi^{-1}\left(t\right)\right)=G\left(t\right)$,
and so the random variable $\varphi\left(W\right)$ has distribution
$\theta$. If $W$ is achievable by some $\left(S,d,\mu\right)$,
then when $X,Y\sim\mu$ are independent, we have $\varphi\left(d\left(X,Y\right)\right)\sim\theta$.
However, this does not immediately mean that $\left(S,\varphi\circ d,\mu\right)$
is a measured metric space that achieves $\theta$, since there is
no guarantee that $\varphi\circ d$ is still a metric (or that $\mu$
is still Borel under the topology induced by $\varphi\circ d$). The
essence of the proof is to find a suitable $W$ which allows $\varphi$
to preserve the metric $d$.
\begin{defn}
A function $f:\nonnegative\to\nonnegative$ is called \emph{metric
preserving }if for every metric space $\left(S,d\right)$, the function
$f\circ d$ is also a metric on $S$. It is called \emph{strongly
metric preserving} if it is metric preserving and $\left(S,d\right)$
is topologically equivalent to $\left(S,f\circ d\right)$.
\end{defn}

Metric preserving functions have been well studied; see \cite{corazza_introduction_to_metric_preserving_functions}
for a brief introduction. We will require only the following standard
result.
\begin{thm}
\label{thm:subadditivity_implies_metric_preserving}\cite[page 131]{kelley_general_topology}
Let $f:\nonnegative\to\nonnegative$ be continuous, nondecreasing,
and satisfying the following two conditions:
\begin{enumerate}
\item $f\left(x\right)=0\iff x=0$.
\item (Subadditivity) $f\left(x+y\right)\leq f\left(x\right)+f\left(y\right)$
for all $x,y\in\nonnegative$.
\end{enumerate}
Then $f$ is strongly metric preserving.
\end{thm}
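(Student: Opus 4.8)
The plan is to verify directly that $f\circ d$ satisfies the three axioms of a metric, and then to establish the topological equivalence as a separate step. First I would check that $f\circ d$ is symmetric and nonnegative, which is immediate since $d$ is symmetric and nonnegative and $f$ maps into $\nonnegative$. The point-separation axiom follows from condition (1): we have $f\left(d\left(x,y\right)\right)=0$ if and only if $d\left(x,y\right)=0$, which holds if and only if $x=y$ since $d$ is a metric. The only axiom requiring real work is the triangle inequality, and this is where subadditivity and monotonicity combine.

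\textbf{Triangle inequality.} For any $x,y,z\in S$, the triangle inequality for $d$ gives $d\left(x,z\right)\leq d\left(x,y\right)+d\left(y,z\right)$. Since $f$ is nondecreasing, applying $f$ to both sides preserves the inequality:
\[
f\left(d\left(x,z\right)\right)\leq f\left(d\left(x,y\right)+d\left(y,z\right)\right).
\]
By subadditivity (condition (2)), applied with the two nonnegative arguments $d\left(x,y\right)$ and $d\left(y,z\right)$,
\[
f\left(d\left(x,y\right)+d\left(y,z\right)\right)\leq f\left(d\left(x,y\right)\right)+f\left(d\left(y,z\right)\right).
\]
Chaining these two inequalities yields $f\left(d\left(x,z\right)\right)\leq f\left(d\left(x,y\right)\right)+f\left(d\left(y,z\right)\right)$, which is exactly the triangle inequality for $f\circ d$. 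This shows $f\circ d$ is a metric, so $f$ is metric preserving.

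\textbf{Topological equivalence.} It remains to show $\left(S,d\right)$ and $\left(S,f\circ d\right)$ induce the same topology, for which I would show that convergence of sequences coincides (metric spaces are first countable, so sequential convergence determines the topology). The key analytic input is that $f$ is continuous with $f\left(0\right)=0$, and is strictly positive away from $0$. If $x_{k}\to x$ in $d$, then $d\left(x_{k},x\right)\to0$, and continuity of $f$ at $0$ gives $f\left(d\left(x_{k},x\right)\right)\to f\left(0\right)=0$, so $x_{k}\to x$ in $f\circ d$. Conversely, suppose $f\left(d\left(x_{k},x\right)\right)\to0$ but $d\left(x_{k},x\right)\not\to0$; then along a subsequence $d\left(x_{k_{j}},x\right)\geq\delta$ for some $\delta>0$, and monotonicity gives $f\left(d\left(x_{k_{j}},x\right)\right)\geq f\left(\delta\right)>0$ (strict positivity following from condition (1) since $\delta>0$), contradicting $f\left(d\left(x_{k},x\right)\right)\to0$. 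Hence the two notions of convergence agree.

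The main obstacle is really conceptual rather than technical: the triangle inequality step requires using monotonicity and subadditivity in the correct order — first pushing $f$ through the inequality via monotonicity, then splitting via subadditivity — and one must be careful that subadditivity is only assumed for arguments in $\nonnegative$, which is exactly the domain in which $d$ takes its values. The topological equivalence, while routine, crucially uses that $f$ vanishes \emph{only} at $0$; without strict positivity away from the origin, $f\circ d$ could be a metric inducing a strictly coarser topology, which is why condition (1) is stated as an ``if and only if'' rather than merely $f\left(0\right)=0$.
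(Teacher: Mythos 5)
Your proof is correct. Note that the paper does not prove this statement at all: it is quoted as a standard result from Kelley's \emph{General Topology} and used as a black box, so there is nothing to compare against. Your argument is the standard one --- monotonicity pushes $f$ through the triangle inequality for $d$, subadditivity then splits the result, and condition (1) gives point separation; for topological equivalence, continuity of $f$ at $0$ handles one direction and monotonicity together with $f\left(\delta\right)>0$ for $\delta>0$ handles the other. All steps check out, including the observation that sequential convergence suffices to compare the two metric topologies.
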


Theorem \ref{thm:subadditivity_implies_metric_preserving} implies
that it suffices to find a continuous random variable $W$ such that
$G^{-1}\left(F_{W}\left(t\right)\right)$ is subadditive. If $G^{-1}$
is itself already subadditive, then no modification by $F_{W}$ is
actually needed; in this case, we can take $W$ to be the uniform
measure on $\left[0,1\right]$, so that $F_{W}\left(t\right)=t$ for
$t\in\left[0,1\right]$ and $G^{-1}\left(F_{W}\left(t\right)\right)=G^{-1}\left(t\right)$.
Obtaining $\theta$ is then just an instance of the inverse sampling
theorem (as we saw in the examples section, the uniform measure is
indeed feasible). However, when $G^{-1}$ is not subadditive, the
idea is that if $F_{W}$ itself is in some sense very strongly subadditive,
then its subadditivity is enough to counter the non-subadditivity
of $G^{-1}$. This is made precise in the following simple proposition. 
\begin{prop}
\label{prop:composing_into_subadditivity}Let $\Psi:\left[0,1\right]\to\left[0,1\right]$
be an absolutely continuous nondecreasing function, and assume that
there exist $m,M>0$ such that
\begin{equation}
m<\Psi'\left(t\right)<M.\label{eq:h_has_pinched_derivatives}
\end{equation}
Let $f:\nonnegative\to\left[0,1\right]$ be such that for every $x,y\in\nonnegative$
with $x\leq y$, 
\begin{equation}
f\left(x+y\right)\leq\frac{m}{M}f\left(x\right)+f\left(y\right).\label{eq:f_is_strongly_subadditive}
\end{equation}
Then $\Psi\circ f$ is subadditive.
\end{prop}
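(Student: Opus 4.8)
The plan is to unwind the definition of subadditivity directly and let the two-sided derivative bounds do the work. Fix $x,y\in\nonnegative$; since the target inequality $\Psi(f(x+y))\le\Psi(f(x))+\Psi(f(y))$ is symmetric in $x$ and $y$, I may assume $x\le y$, which is exactly the regime in which the hypothesis (\ref{eq:f_is_strongly_subadditive}) applies. Writing $u=f(x)$, $v=f(y)$ and $a=f(x+y)$, all three lying in $[0,1]$, the goal becomes $\Psi(a)\le\Psi(u)+\Psi(v)$. The hypothesis gives $a\le\frac mM u+v$, and since also $a\le 1$ and $\Psi$ is nondecreasing, I would first reduce to estimating $\Psi$ at the clamped point $c:=\min\!\big(1,\tfrac mM u+v\big)$, using monotonicity to write $\Psi(a)\le\Psi(c)$.

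The heart of the argument is to exploit the pinched derivative bounds (\ref{eq:h_has_pinched_derivatives}) through the integral representation $\Psi(\beta)-\Psi(\alpha)=\int_\alpha^\beta\Psi'(t)\,dt$, valid for $0\le\alpha\le\beta\le 1$ by absolute continuity. One checks that $c$ lies in $[v,1]$ and satisfies $c-v\le\frac mM u$. Hence the increment of $\Psi$ from $v$ to $c$ is controlled from above by the Lipschitz bound $\Psi(c)-\Psi(v)\le M(c-v)\le M\cdot\frac mM u=mu$. On the other hand, the lower derivative bound gives $\Psi(u)\ge\Psi(u)-\Psi(0)=\int_0^u\Psi'(t)\,dt\ge mu$, where I use $\Psi(0)\ge 0$ since $\Psi$ maps into $[0,1]$. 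Chaining these yields $\Psi(c)-\Psi(v)\le mu\le\Psi(u)$, that is $\Psi(c)\le\Psi(u)+\Psi(v)$; combined with $\Psi(a)\le\Psi(c)$ this is precisely the claim.

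The step I expect to require the most care is the domain bookkeeping rather than any deep estimate. Because $\frac mM u+v$ can exceed $1$ (for instance when $u$ and $v$ are both near $1$), the quantity $\Psi\!\big(\tfrac mM u+v\big)$ need not be defined, which is why I introduce the clamp $c=\min\!\big(1,\tfrac mM u+v\big)$ and verify $c\in[v,1]$ together with $c-v\le\frac mM u$ before invoking the derivative bounds. The other delicate point is the calibration of constants: the factor $m/M$ appearing in (\ref{eq:f_is_strongly_subadditive}) is chosen exactly so that the upper Lipschitz constant $M$ over the interval $[v,c]$ cancels against it, leaving $mu$, which is in turn dominated by $\Psi(u)$ through the lower bound $m$. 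This precise matching $M\cdot\frac mM=m$ is what makes the two halves of the estimate fit, and I would emphasize that no case analysis beyond the single clamp is needed.
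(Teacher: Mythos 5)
Your proof is correct and follows essentially the same route as the paper: bound the increment of $\Psi$ above $f(y)$ by the Lipschitz constant $M$, cancel against the $m/M$ factor in the hypothesis, and absorb the resulting $m f(x)$ into $\Psi(f(x))$ via the lower derivative bound. Your clamp $c=\min(1,\tfrac{m}{M}f(x)+f(y))$ is a minor technical refinement (the paper integrates directly from $f(y)$ to $f(x+y)$, staying inside $[0,1]$ automatically) and does not change the substance of the argument.
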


\begin{proof}
By absolute continuity, for all $x\leq y$,
\begin{align*}
\Psi\left(f\left(x+y\right)\right) & =\Psi\left(f\left(y\right)\right)+\int_{f\left(y\right)}^{f\left(x+y\right)}\Psi'\left(t\right)\,dt\\
 & \stackrel{\eqref{eq:h_has_pinched_derivatives}}{\leq}\Psi\left(f\left(y\right)\right)+M\left(f\left(x+y\right)-f\left(y\right)\right)\\
 & \stackrel{\eqref{eq:f_is_strongly_subadditive}}{\leq}\Psi\left(f\left(y\right)\right)+mf\left(x\right)\\
 & \stackrel{\eqref{eq:h_has_pinched_derivatives}}{\leq}\Psi\left(f\left(y\right)\right)+\Psi\left(f\left(x\right)\right).
\end{align*}
\end{proof}
The next proposition shows that it is indeed possible to obtain such
$f$. 
\begin{prop}
\label{prop:strongly_subadditive_functions_are_realizable}For every
$\eps\in\left(0,1\right)$, there exists a feasible continuous random
variable $W$ whose cumulative distribution function $F_{W}$ satisfies
\[
F_{W}\left(x+y\right)\le\eps F_{W}\left(x\right)+F_{W}\left(y\right)
\]
for all $x\leq y$. 
\end{prop}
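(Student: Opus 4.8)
The plan is to realize the distribution $\theta$ whose cumulative distribution function is $F\left(t\right)=t^{\beta}$ on $\left[0,1\right]$ (and $F\left(t\right)=1$ for $t\geq1$), where $\beta=\log_{2}\left(1+\eps\right)\in\left(0,1\right)$; this is the $\mathrm{Beta}\left(\beta,1\right)$ law, which is continuous. I would proceed in two stages: first check that this $F$ obeys the required inequality, and then exhibit a complete separable measured metric space achieving it.

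For the inequality $F\left(x+y\right)\leq\eps F\left(x\right)+F\left(y\right)$ with $0\leq x\leq y$, I would split into three cases. If $y\geq1$ then the right-hand side is at least $F\left(y\right)=1\geq F\left(x+y\right)$. If $x+y\leq1$, the homogeneity of $t\mapsto t^{\beta}$ lets me set $u=x/y\in\left[0,1\right]$ and reduce to the one-variable inequality $\left(1+u\right)^{\beta}\leq1+\eps u^{\beta}$ on $\left[0,1\right]$; since the two sides agree at $u=0$ and their difference is increasing-then-decreasing, the binding constraint is at $u=1$, namely $2^{\beta}\leq1+\eps$. The remaining case $y<1\leq x+y$ asks for $\eps x^{\beta}+y^{\beta}\geq1$, whose minimum over the admissible range again reduces to $2^{\beta}\leq1+\eps$. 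Thus all three cases hold precisely when $\beta\leq\log_{2}\left(1+\eps\right)$, which is why I chose $\beta=\log_{2}\left(1+\eps\right)$.

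The heart of the proof --- and the step I expect to be the main obstacle --- is feasibility, because every continuous feasible example produced so far (uniform, exponential) lives on a ``line-like'' space, and on such spaces realizing a law as concentrated near $0$ as $t^{\beta}$ would require reshaping the metric by a convex, hence non-subadditive, function, destroying the triangle inequality. I would sidestep this by working with an \emph{ultrametric} space, where the triangle inequality is so lax that \emph{every} nondecreasing $\psi$ with $\psi\left(0\right)=0$ satisfies $\psi\left(d\left(x,z\right)\right)=\psi\left(\max\left(d\left(x,y\right),d\left(y,z\right)\right)\right)\leq\psi\left(d\left(x,y\right)\right)+\psi\left(d\left(y,z\right)\right)$, so that arbitrary monotone reshapings preserve the metric. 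Concretely I would build a continuous ultrametric on $\left[0,1\right]$ from a Poisson process of ``cuts'': a Poisson process on $\left[0,1\right]\times\left(0,\infty\right)$ with intensity $dx\tensor\nu\left(dh\right)$, where $\nu\left(\left(h,\infty\right)\right)=1/h$ is finite for each $h>0$ but has infinite total mass, and set $\rho\left(x,y\right)$ to be the largest height of a cut lying between $x$ and $y$. The identity $\rho\left(x,z\right)=\max\left(\rho\left(x,y\right),\rho\left(y,z\right)\right)$ is immediate, the infinite cut rate forces $\rho\left(x,y\right)>0$ for $x\neq y$, and taking $\mu$ to be Lebesgue measure makes $\left(\left[0,1\right],\rho,\mu\right)$ a measured metric space. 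The technical work is to verify that this space is separable and complete and that $\mu$ is Borel, and that the distance CDF $F_{0}\left(h\right)=\p\left[\rho\left(X,Y\right)\leq h\right]=\e\big[e^{-\abs{X-Y}/h}\big]$ is continuous, strictly increasing, and has no atom at $0$ --- all of which follow from the chosen intensity. (Any construction of a measured, complete, separable ultrametric space in which the coalescence height of two independent samples is a continuous random variable with full support would serve equally well; only continuity and strict monotonicity of $F_{0}$ are used downstream.)

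Finally I would reshape. Since $F_{0}$ is continuous and strictly increasing, the probability integral transform gives $F_{0}\left(\rho\left(X,Y\right)\right)\sim U\left[0,1\right]$. Setting $\psi\left(s\right)=F_{0}\left(s\right)^{1/\beta}$ --- a continuous strictly increasing bijection with $\psi\left(0\right)=0$ --- the function $\psi\circ\rho$ is again an ultrametric by the observation above, topologically equivalent to $\rho$ (so $\mu$ stays Borel and completeness persists), and $W:=\psi\left(\rho\left(X,Y\right)\right)=\big(F_{0}\left(\rho\left(X,Y\right)\right)\big)^{1/\beta}$ has CDF $\p\left[W\leq t\right]=t^{\beta}$. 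This $W$ is continuous and feasible, and by the first stage its CDF satisfies $F_{W}\left(x+y\right)\leq\eps F_{W}\left(x\right)+F_{W}\left(y\right)$ for $x\leq y$, as required. Note that $\psi$ is convex and not subadditive, so Theorem \ref{thm:subadditivity_implies_metric_preserving} does not apply here; it is precisely the ultrametric structure that makes the reshaping legitimate.
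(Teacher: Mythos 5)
Your first stage is fine: with $\beta=\log_{2}\left(1+\eps\right)$ the function $F\left(t\right)=\min\left(t^{\beta},1\right)$ does satisfy $F\left(x+y\right)\leq\eps F\left(x\right)+F\left(y\right)$ for $x\leq y$, by the case analysis you sketch. The fatal gap is in the second stage. In \emph{any} separable ultrametric space $\left(S,\rho\right)$, the set of nonzero values attained by $\rho$ is countable: if $D$ is a countable dense set and $\rho\left(x,y\right)=r>0$, pick $x'\in D$ with $\rho\left(x,x'\right)<r$; the strong triangle inequality gives $\rho\left(x',y\right)\leq r$ and $r\leq\max\left(\rho\left(x,x'\right),\rho\left(x',y\right)\right)$, forcing $\rho\left(x',y\right)=r$, and repeating with $y$ yields $\rho\left(x',y'\right)=r$ for some $y'\in D$. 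Hence every positive distance already occurs in $D\times D$, so $\rho\left(X,Y\right)$ is a discrete random variable for \emph{every} Borel $\mu$. No complete separable measured ultrametric space can therefore produce a continuous $F_{0}$; the probability integral transform has nothing to act on, and your parenthetical claim that any such ultrametric construction would serve is vacuous. Your concrete Poisson-cut example shows the same obstruction from another angle: $\e\left[e^{-\left|X-Y\right|/h}\right]$ is the \emph{annealed} law, averaged over the Poisson process, whereas the space handed to the distance problem must be one fixed realization of the cuts, for which $\p\left[\rho\left(X,Y\right)\leq h\right]=\sum_{i}L_{i}\left(h\right)^{2}$ (summing over the intervals of the height-$h$ partition) is a pure jump function of $h$: the law of $\rho\left(X,Y\right)$ is supported on the countable set of cut heights.

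The repair is essentially to give up on realizing $t^{\beta}$ exactly. The paper takes $X,Y$ i.i.d. on $\r$ with CDF $t^{\alpha}$ on $\left[0,1\right]$ and sets $W=\left|X-Y\right|$, which is feasible for free since $\left(\r,\left|\cdot\right|\right)$ is already a complete separable metric space; it then verifies the required inequality directly for the resulting $F_{W}$ (which is genuinely continuous and behaves like $t^{2\alpha}$ near $0$), using concavity of $F_{W}$ together with the estimate $tF_{W}'\left(t\right)/F_{W}\left(t\right)\leq4\alpha$, and finally chooses $\alpha\leq\eps/8$. Your observation that reshaping a line-like space by a convex function destroys the triangle inequality is correct; the lesson is that one must pick a distribution whose distance CDF is \emph{naturally} of power type rather than trying to force the power law afterwards.
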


Given the above, Theorem \ref{thm:bounded_continuous_distributions}
quickly follows.
\begin{proof}[Proof of Theorem \ref{thm:bounded_continuous_distributions}]
By (\ref{eq:bounds_on_density_function}), $G^{-1}$ is absolutely
continuous, strictly increasing, and has upper and lower bounds on
its derivative $\left(G^{-1}\right)\left(t\right)'=\frac{1}{g\left(G^{-1}\left(t\right)\right)}$.
Then by Propositions \ref{prop:composing_into_subadditivity} and
\ref{prop:strongly_subadditive_functions_are_realizable}, there is
a feasible $W$ achieved by $\left(S,d,\mu\right)$ such that $\varphi=G^{-1}\circ F_{W}$
is subadditive. Since both $G^{-1}$ and $F_{W}$ are continuous,
by Theorem \ref{thm:subadditivity_implies_metric_preserving}, $\varphi$
is strongly metric preserving, and so $\left(S,\varphi\circ d,\mu\right)$
achieves $\theta$. 
\end{proof}
\begin{proof}[Proof of Proposition \ref{prop:strongly_subadditive_functions_are_realizable}]
Let $\alpha\in\left(0,\frac{1}{2}\right)$ to be chosen later, define
$H:\r\to\left[0,1\right]$ by 
\[
H\left(t\right)=\begin{cases}
0 & t<0\\
t^{\alpha} & 0\leq t\leq1\\
1 & t>1,
\end{cases}
\]
and let $\mu$ be the distribution whose cumulative distribution function
is $H$. This distribution has density $h\left(t\right)=\alpha t^{\alpha-1}$
for $t\in\left[0,1\right]$. Let $W=\abs{X-Y}$, where $X,Y\sim\mu$
are i.i.d. By definition, $W$ is feasible. For $t\in\left[0,1\right]$,
the cumulative distribution function $F_{W}$ is given by 
\begin{align*}
F_{W}\left(t\right) & =\int_{-\infty}^{\infty}\int_{-\infty}^{\infty}h\left(x\right)h\left(y\right)\one_{\abs{x-y}\leq t}\,dydx\\
 & =2\int_{0}^{1}\int_{x}^{x+t}h\left(x\right)h\left(y\right)\,dydx\\
 & =2\int_{0}^{1}h\left(x\right)\left(H\left(x+t\right)-H\left(x\right)\right)\,dx\\
 & =2\int_{0}^{1-t}\alpha x^{2\alpha-1}\left(\left(1+\frac{t}{x}\right)^{\alpha}-1\right)\,dx+2\int_{1-t}^{1}\alpha x^{\alpha-1}\left(1-x^{\alpha}\right)\,dx.
\end{align*}
The function $F_{W}$ can readily be shown to be concave on $\nonnegative$
by calculating the second derivative of $2\int_{0}^{1}\int_{x}^{x+t}h\left(x\right)h\left(y\right)dydx$.
Thus, for all $z,w\in\nonnegative$, 
\[
F_{W}\left(z\right)\leq F_{W}\left(w\right)+\left(z-w\right)F_{W}'\left(w\right).
\]
Let $x,y\in\left(0,\infty\right)$ with $x\leq y$. Choosing $z=x+y$
and $w=y$, we get 
\begin{align*}
F_{W}\left(x+y\right) & \leq F_{W}\left(y\right)+xF_{W}'\left(y\right)\\
 & \leq F_{W}\left(y\right)+xF_{W}'\left(x\right)\\
 & =F_{W}\left(y\right)+F_{W}\left(x\right)\cdot\frac{xF_{W}'\left(x\right)}{F_{W}\left(x\right)}.
\end{align*}
It therefore suffices to show that for all $t\in\nonnegative$,
\[
\frac{tF_{W}'\left(t\right)}{F_{W}\left(t\right)}\leq\eps.
\]
As $F_{W}$ is concave and increasing, it is enough to show a bound
of $\frac{1}{2}\eps$ for all $t\leq\frac{1}{2}$, since for all $t>\frac{1}{2}$,
$F_{W}'\left(t\right)\leq F_{W}'\left(\frac{1}{2}\right)$ and $F_{W}\left(t\right)\geq F_{W}\left(\frac{1}{2}\right)$.
Assuming that $t\leq\frac{1}{2}$, the denominator is bounded by

\begin{align*}
F_{W}\left(t\right) & =2\alpha\left(\int_{0}^{1-t}x^{2\alpha-1}\left(\left(1+\frac{t}{x}\right)^{\alpha}-1\right)\,dx+\int_{1-t}^{1}x^{\alpha-1}\left(1-x^{\alpha}\right)\,dx\right)\\
 & \geq2\alpha\int_{0}^{t}x^{2\alpha-1}\left(\left(\frac{t}{x}\right)^{\alpha}-1\right)\,dx=2\alpha\left(\frac{1}{\alpha}t^{2\alpha}-\frac{1}{2\alpha}t^{2\alpha}\right)=t^{2\alpha}.
\end{align*}
Recalling that $\alpha<\frac{1}{2}$, the numerator is bounded by 

\begin{align*}
F_{W}'\left(t\right) & =2\int_{0}^{1}h\left(x\right)h\left(x+t\right)\,dx\\
 & =2\alpha^{2}\left(\int_{0}^{t}x^{2\alpha-2}\left(1+\frac{t}{x}\right)^{\alpha-1}\,dx+\int_{t}^{1-t}x^{2\alpha-2}\left(1+\frac{t}{x}\right)^{\alpha-1}\,dx\right)\\
 & \leq2\alpha^{2}\left(\int_{0}^{t}x^{2\alpha-2}\left(\frac{t}{x}\right)^{\alpha-1}\,dx+\int_{t}^{1-t}x^{2\alpha-2}\,dx\right)\\
 & =2\alpha t^{2\alpha-1}-2\alpha^{2}\frac{1}{1-2\alpha}\left(\frac{1}{\left(1-t\right)^{1-2\alpha}}-\frac{1}{t^{1-2\alpha}}\right)\\
 & \leq2\alpha\frac{1}{t^{1-2\alpha}}\left(1+\frac{\alpha}{1-2\alpha}\right).
\end{align*}
Choosing $\alpha\leq\frac{1}{4}$ then gives $F_{W}'\left(t\right)\leq4\alpha t^{2\alpha-1}$.
We thus have 
\[
\frac{tF_{W}'\left(t\right)}{F_{W}\left(t\right)}\leq\frac{4\alpha t^{2\alpha}}{t^{2\alpha}}=4\alpha,
\]
and the proposition follows for $\alpha\leq\frac{1}{8}\eps$.
\end{proof}
\begin{rem}
The function $F_{W}$ may be expressed in a slightly more closed form
using hypergeometric functions; a short calculation reveals that for
$t\in\left[0,1\right]$, 
\[
F_{W}\left(t\right)=1+2\left(1-t\right)^{\alpha}\left(\hypergeometric{-\alpha}1{1+\alpha}{1-t}-1\right),
\]
where $\hypergeometric abcz$ is the hypergeometric function
\[
\hypergeometric abcz=\frac{\Gamma\left(c\right)}{\Gamma\left(b\right)\Gamma\left(c-b\right)}\int_{0}^{1}\frac{x^{b-1}\left(1-x\right)^{c-b-1}}{\left(1-xz\right)^{a}}\,dx.
\]
A plot of $F_{W}$ for some values of $\alpha$ is given in Figure
\ref{fig:F_W}, where the asymptotic behavior of $t^{2\alpha}$ can
be seen near the origin.

\begin{figure}[h]
\begin{centering}
\includegraphics[scale=0.5]{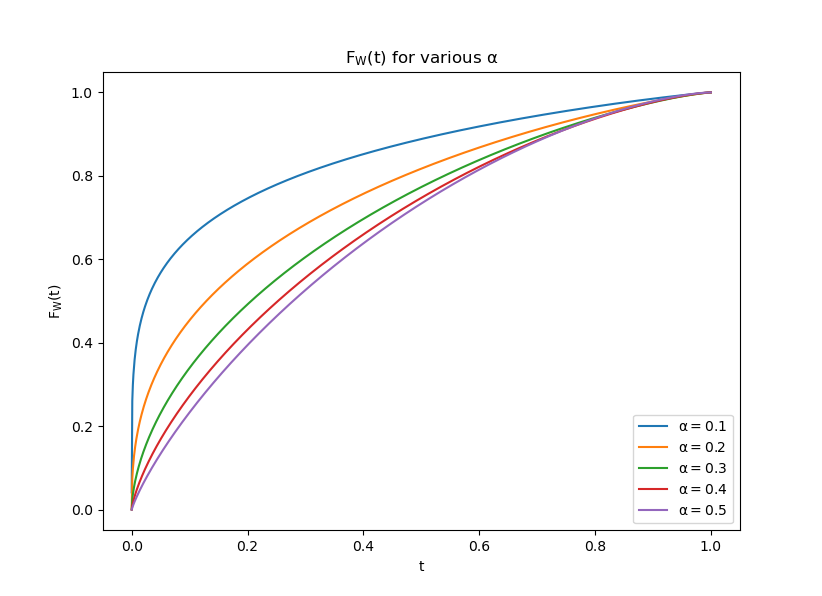}
\par\end{centering}
\caption{The function $F_{W}$ for various values of $\alpha$.\label{fig:F_W}}

\end{figure}
\end{rem}

\begin{rem}
It might perhaps be possible to extend the proof method of Theorem
\ref{thm:bounded_continuous_distributions} to distributions on $\nonnegative$
where the density is always positive. Indeed, if we compose $G^{-1}$
with another function $f$, we still have 
\begin{align*}
G^{-1}\left(f\left(x+y\right)\right) & =G^{-1}\left(f\left(y\right)\right)+\int_{f\left(y\right)}^{f\left(x+y\right)}\frac{d}{dt}G^{-1}\left(t\right)\,dt\\
 & \leq G^{-1}\left(f\left(y\right)\right)+\left(f\left(x+y\right)-f\left(y\right)\right)\sup_{t\in\left[f\left(y\right),f\left(x+y\right)\right]}\frac{d}{dt}G^{-1}\left(t\right).
\end{align*}
An analogue of Theorem \ref{thm:bounded_continuous_distributions}
would follow if we could find, for each $\eps>0$, an unbounded feasible
random variable $W$ such that for all $x\leq y$
\[
F_{W}\left(x+y\right)\leq F_{W}\left(y\right)+\left(\eps\sup_{t\in\left[F_{W}\left(y\right),F_{W}\left(x+y\right)\right]}\frac{d}{dt}G^{-1}\left(t\right)\right)F_{W}\left(x\right).
\]
Unlike the random variable from Proposition \ref{prop:strongly_subadditive_functions_are_realizable},
such a $W$ would have to rely directly on $G$ in some way. However,
the method cannot be easily extended to distributions whose density
is $0$ in an interval: this would imply that $G^{-1}\circ F_{W}$
is discontinuous, which would prevent it from preserving the metric. 
\end{rem}

\section{Acknowledgments}

We thank Wendelin Werner and Guillaume Blanc for useful comments,
and the anonymous referee for their corrections and suggestions.

\bibliographystyle{alpha}
\bibliography{distance_problem}

\end{document}